\newtheorem{theorem}{Theorem}
\newtheorem{example}{Example}
\newtheorem{remark}{Remark}
\begin{document}

\title{Bilateral sums related to Ramanujan-like series}
\author{Jesús Guillera} 
\address{Univ. de Zaragoza (Spain)}
\email{jguillera@gmail.com}

\maketitle

\begin{abstract}
We define bilateral series related to Ramanujan-like series for $1/\pi^2$. Then, we conjecture a property of them and give some applications.
\end{abstract}

\section{Introduction}
In \cite{GuiRo} we constructed bilateral summations related to Ramanujan-type series for $1/\pi$ and applied them to prove a new kind of identities, which we called ``the upside-down" counterpart. In this paper we consider bilateral sums related to Ramanujan-like series for $1/\pi^2$, conjecture a property of them and give some applications. We will need the following known theorems:

\begin{theorem}\label{per} \rm
Suppose that $f(x)$ is the sum over all integers $n$ of $g(n+x)$. Then, clearly $f(x)$ is a periodic function of period $x=1$.
\end{theorem}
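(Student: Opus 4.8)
The plan is to prove the statement in its sharpest form, namely that $f(x+1)=f(x)$ for every $x$, since this equality is precisely what it means for $f$ to be periodic of period $1$. Writing the hypothesis explicitly as
\[
f(x)=\sum_{n\in\mathbb{Z}} g(n+x),
\]
I would begin by substituting $x+1$ for $x$ in this definition, obtaining $f(x+1)=\sum_{n\in\mathbb{Z}} g(n+1+x)$.

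The one substantive step is a reindexing of the bilateral sum. Setting $m=n+1$ and noting that as $n$ runs over all of $\mathbb{Z}$ the shifted index $m=n+1$ also runs over all of $\mathbb{Z}$, the two index sets coincide. Therefore
\[
f(x+1)=\sum_{m\in\mathbb{Z}} g(m+x)=f(x),
\]
which is the desired periodicity.

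The only point that genuinely requires care — and the reason the argument is more than a tautology — is the convergence of the bilateral series. The relabeling $n\mapsto n+1$ is legitimate whenever the sum converges in a sense stable under this shift, for instance absolute convergence, or symmetric (principal-value) convergence of the two-sided sum. I would therefore read the phrase ``$f(x)$ is the sum over all integers $n$'' as carrying the implicit hypothesis that this convergence holds; under that assumption the reindexing preserves the value and the conclusion follows at once. I expect this convergence caveat, rather than the index shift itself, to be the only place where any real hypothesis is used, which is presumably why the author signals the result as ``clearly'' true.
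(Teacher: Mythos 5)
Your proof is correct and matches the paper's intent exactly: the paper gives no written proof at all (the theorem is stated as ``clearly'' true), and the intended argument is precisely your reindexing $n \mapsto n+1$ of the bilateral sum. Your added caveat about shift-stable convergence (in the paper's application the sums are interpreted via analytic continuation, so the point is worth noting) is a sensible refinement, not a departure.
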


\begin{theorem}\label{fou} \rm
Let $f: \mathbb{C} \longrightarrow \mathbb{C}$ be a holomorphic function such that $f(x)=\mathcal{O}(e^{c \pi |{\rm Im}(x)|})$, where  $c \geq 0$ is a constant, and suppose that $f(x)$ admits a Fourier series:
\[ 
f(x)=\sum_{n=-\infty}^{+\infty} a_n e^{2 \pi i n x}. 
\]
Then $|2n| > c \, \Rightarrow a_n=0$.
\end{theorem}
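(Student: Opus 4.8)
The plan is to extract the Fourier coefficients via the standard integral formula and then push the contour of integration off to infinity in the imaginary direction, using the growth bound to show the integral vanishes. Since $f$ is periodic of period $1$ (this is the setting we care about, cf. Theorem~\ref{per}) and holomorphic with controlled growth, the coefficient $a_n$ is given by
\[
a_n = \int_{0}^{1} f(x)\, e^{-2\pi i n x}\, dx,
\]
and because $f$ is periodic and entire we may replace the segment $[0,1]$ by the translated segment $[iT, 1+iT]$ for any real $T$ without changing the value of the integral, by Cauchy's theorem applied to the rectangle with vertices $0,1,1+iT,iT$ (the two vertical sides cancel by periodicity).

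First I would write $a_n$ as the integral over the shifted horizontal segment,
\[
a_n = \int_{0}^{1} f(x+iT)\, e^{-2\pi i n (x+iT)}\, dx,
\]
and estimate the modulus of the integrand. On this segment ${\rm Im}(x+iT) = T$, so the growth hypothesis gives $|f(x+iT)| = \mathcal{O}(e^{c\pi |T|})$, while $|e^{-2\pi i n(x+iT)}| = e^{2\pi n T}$. Taking $T \to +\infty$ the integrand is bounded in modulus by a constant times $e^{(c\pi + 2\pi n)T} = e^{\pi(c + 2n)T}$, which tends to $0$ provided $c + 2n < 0$, i.e. $2n < -c$. Symmetrically, letting $T \to -\infty$ handles the case $2n > c$. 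In either case the bound $|2n| > c$ forces one of these two limits to apply, yielding $a_n = 0$.

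The main obstacle I anticipate is making the growth estimate uniform enough to conclude, rather than the contour shift itself. The hypothesis $f(x) = \mathcal{O}(e^{c\pi|{\rm Im}(x)|})$ is an asymptotic statement, so I would need to confirm that the implied constant can be taken uniform as $x$ ranges over the horizontal strip $\{x + iT : 0 \le x \le 1\}$ for all large $|T|$; this should follow from periodicity, which confines the real part to a compact interval and reduces the growth entirely to the imaginary direction. Once uniformity is secured, the dominated (or simply bounded) convergence of the integral to $0$ is immediate, and the boundary case where $c + 2n = 0$ never arises because the hypothesis is the strict inequality $|2n| > c$.
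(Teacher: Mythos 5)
Your proposal is correct and is essentially identical to the paper's own proof: both express $a_n$ as an integral over a horizontal segment of length one at imaginary height $t$ (or $T$) and let $t \to +\infty$ or $t \to -\infty$ according to the sign of $n$, so that the bound $e^{\pi(c \pm 2n)|t|}$ forces $a_n = 0$ whenever $|2n| > c$. The only difference is that you spell out the contour-shift justification (Cauchy's theorem on the rectangle, with vertical sides cancelling by periodicity) and the uniformity of the growth constant, which the paper leaves implicit.
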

\begin{proof}
It is known that the coefficients of a Fourier series are given by
\[ 
a_n=\int_{a+it}^{a+1+it} f(x) e^{-2 \pi i n x} dx. 
\]
Then for $n<0$ let $t \to+\infty$ and for $n \geq 0$ let $t \to -\infty$.
\end{proof}

\section{Ramanujan-like series for $1/\pi^2$}
Let $s_0=1/2 \,$, $s_3=1-s_1 \,$, $s_4=1-s_2 \,$. We recall that a Ramanujan-like series for $1/\pi^2$ is a series of the form
\begin{equation}\label{gui-general}
\sum_{n=0}^{\infty} \left[\prod_{i=0}^{4} \frac{(s_i)_n}{(1)_n}\right] (a+bn+cn^2) z^n =\frac{1}{\pi^2},
\end{equation}
where $z$, $a$, $b$ and $c$ are algebraic numbers and the possible couples $(s_1, s_2)$ are $(1/2,1/2)$, $(1/2,1/3)$, $(1/2,1/4)$, $(1/2,1/6)$, $(1/3,1/3)$, $(1/3,1/4)$, $(1/3,1/6)$, $(1/4,1/4)$, $(1/4,1/6)$, $(1/6,1/6)$, $(1/5,2/5)$, $(1/8,3/8)$, $(1/10,3/10)$, $(1/12,5/12)$. 
Up to date $11$ convergent and $6$ ``divergent" formulas are known in this new family (see all in the Appendix). The value
\begin{equation}\label{tau}
\tau=\frac{c}{\sqrt{1-z}},
\end{equation}
plays an important roll in the theory \cite[eq. 3.47]{Gui-matrix}.

\section{Bilateral series related to Ramanujan-like series for $1/\pi^2$}
We introduce the  function
\begin{equation}\label{rama-like-gui}
f(x)=\prod_{i=0}^{4} \frac{\cos \pi x - \cos \pi s_i}{1-\cos \pi s_i} \sum_{n \in \mathbb{Z}} (-1)^n \left[\prod_{i=0}^{4} \frac{(s_i)_{n+x}}{(1)_{n+x}}\right] [a+b(n+x)+c(n+x)^2] (-z)^{n+x}.
\end{equation}
Applying Theorem \ref{per}, we see that it is a periodic function of period $x=1$ because $\cos \pi x - \cos \pi s_0 =\cos \pi x$ and $(-1)^n \cos \pi x = \cos \pi (n+x)$. Besides it is analytic due to the factor with the cosenus, which cancels the poles at $x=-s_i$ and therefore all the other poles due to the periodicity of $f(x)$. On the other hand we conjecture that $f(x)=\mathcal{O}(e^{5 \pi |{\rm Im}(x)|})$. Hence by Theorem \ref{fou} we deduce that 
\begin{multline}\label{bilateral-general}
\prod_{i=0}^{4} \frac{\cos \pi x - \cos \pi s_i}{1-\cos \pi s_i} \sum_{n \in \mathbb{Z}} (-1)^n \left[\prod_{i=0}^{4} \frac{(s_i)_{n+x}}{(1)_{n+x}}\right] [a+b(n+x)+c(n+x)^2] (-z)^{n+x} \\
= \frac{1}{\pi^2} \left( \frac{}{}  u_1 \cos 2\pi x + u_2 \cos 4\pi x + (1-u_1-u_2) + v_1 \sin 2 \pi x + v_2 \sin 4 \pi x \right),
\end{multline}
where $u_1$, $u_2$, $v_1$, $v_2$ do not depend on $x$, and we can determine their values by giving values to $x$. We conjecture that for the bilateral series corresponding to a Ramanujan-like series for $1/\pi^2$ (that is, when $z$, $a$, $b$, $c$ are algebraic numbers), the values of $u_1$, $u_2$, $v_1$ and $v_2$ are rational numbers. In addition, for the case of alternating series we conjecture that $v_1=v_2=0$. These bilateral series are ``divergent", because the sum to the left or the sum to the right ``diverges". However we turn them convergent by analytic continuation. We can write (\ref{bilateral-general}) in the following way:
\begin{multline}\label{bilateral-general-split}
\sum_{n=0}^{\infty} (-1)^n \left[\prod_{i=0}^{4} \frac{(s_i+x)_{n}}{(1+x)_{n}} \right] [a+b(n+x)+c(n+x)^2] (-z)^{n+x} \\ + x^5 \sum_{n=1}^{\infty} (-1)^n \left[\prod_{i=0}^{4} \frac{(1-x)_{n}}{(s_i-x)_{n}} \right] \frac{a+b(-n+x)+c(-n+x)^2}{(-n+x)^5} (-z)^{-n+x} = \\
\frac{1}{\pi^2} \left[\prod_{i=0}^{4} \frac{(1)_x}{(s_i)_{x}} \right] \frac{u_1 \cos 2\pi x + u_2 \cos 4\pi x + (1-u_1-u_2) + v_1 \sin 2 \pi x + v_2 \sin 4 \pi x}{\csc^2 \pi s_1 \csc^2 \pi s_2 \cos \pi x (\cos^2 \pi x - \cos^2 \pi s_1)(\cos^2 \pi x - \cos^2 \pi s_2)},
\end{multline}
where we have splitted the summation in two sums and used the properties
\[
\frac{1}{(1)_{-n+x}}=(-1)^{n+1} \frac{(1-x)_n}{(1)_x} \frac{x}{n-x}, \quad 
(s_i)_{-n+x}=(-1)^n \frac{(s_i)_x}{(1-s_i-x)_n}.
\]
As the second sum is $\mathcal{O}(x^5)$, formula (\ref{bilateral-general-split}) implies an expansion of the form
\[
\sum_{n=0}^{\infty} (-1)^n \left[\prod_{i=0}^{4} \frac{(s_i+x)_{n}}{(1+x)_{n}} \right] [a+b(n+x)+c(n+x)^2] (-z)^{n+x} = \frac{1}{\pi^2} - k \frac{x^2}{2!} + j \pi^2 \frac{x^4}{4!}+ \mathcal{O}(x^5),
\]
if $z<0$ (alternating series), and an expansion of the form
\[
\sum_{n=0}^{\infty} \left[\prod_{i=0}^{4} \frac{(s_i+x)_{n}}{(1+x)_{n}} \right] [a+b(n+x)+c(n+x)^2] z^{n+x} = \frac{1}{\pi^2} - k \frac{x^2}{2!} + j \pi^2 \frac{x^4}{4!}+ \mathcal{O}(x^5),
\]
if $0<z<1$ (convergent series of positive terms). 
\par Hence, the conjecture that the coefficients of the Fourier expansion are rational is equivalent to the conjecture that $k$ and $j$ are rational stated in \cite{Gui-matrix,AlGu}. The relation of $\tau$ with $k$ and $j$ is \cite[eq. 3.48]{Gui-matrix}:
\[
\tau^2=\frac{j}{12}+\frac{k^2}{4}+\frac{5k}{3}+1+(\cot^2 \pi s_1)(\cot^2 \pi s_2)+(1+k)(\cot^2 \pi s_1+\cot^2 \pi s_2).
\]
From (\ref{bilateral-general-split}) we deduce that
\begin{equation}
\sum_{n \in \mathbb{Z}} \left[\prod_{i=0}^{4} \frac{(s_i)_n}{(1)_n}\right] (a+b n+c n^2) 
z^n=
\sum_{n=0}^{\infty} \left[\prod_{i=0}^{4} \frac{(s_i)_n}{(1)_n}\right] (a+b n+c n^2) z^n,
\end{equation}
and as the function (\ref{rama-like-gui}) has period $x=1$, we see that for all integer $x$, we have
\begin{multline}\label{x-integer}
\sum_{n \in \mathbb{Z}} (-1)^n \left[\prod_{i=0}^{4} \frac{(s_i)_{n+x}}{(1)_{n+x}} \right] [a+b(n+x)+c(n+x)^2] (-z)^{n+x} \\ = (-1)^x \sum_{n=0}^{\infty} \left[\prod_{i=0}^{4} \frac{(s_i)_n}{(1)_n}\right] (a+b n+c n^2) z^n.
\end{multline}
If $z>0$, we can write
\begin{multline}\label{z-mayor0}
\sum_{n \in \mathbb{Z}} (-1)^n \left[\prod_{i=0}^{4} \frac{(s_i)_{n+x}}{(1)_{n+x}} \right] [a+b(n+x)+c(n+x)^2] (-z)^{n+x} \\ =e^{-i \pi x} \sum_{n \in \mathbb{Z}} \left[\prod_{i=0}^{4} \frac{(s_i)_{n+x}}{(1)_{n+x}} \right] [a+b(n+x)+c(n+x)^2] z^{n+x}.
\end{multline}
Taking into account (\ref{x-integer}) and (\ref{z-mayor0}), we have written the following Maple procedure: 
\begin{verbatim}
bilater:=proc(s1,s2,z0,x,j)
local p,s0,s3,s4,z1,h,hh,r,u,v;
p:=(a,b)->pochhammer(a,b): s0:=1/2; s3:=1-s1; s4:=1-s2:
h:=n->p(s0,n)*p(s1,n)*p(s2,n)*p(s3,n)*p(s4,n)/p(1,n)^5:
if type(x,integer) then; hh:=n->h(n)*n^j:
r:=evalf((-1)^x*subs(z=z0,sum(hh(n)*z^n,n=0..infinity))):
return r; else;
if z0<0 then; z1:=z0; hh:=(j,n)->(-1)^n*h(n+x)*(n+x)^j; 
else z1:=-z0; hh:=(j,n)->exp(-I*Pi*x)*h(n+x)*(n+x)^j; fi;
u:=j->subs(z=z1,sum(hh(j,n)*(-z)^(n+x),n=0..infinity)):
v:=j->subs(z=z1,sum(hh(j,-n)*(-z)^(-n+x),n=1..infinity)):
r:=evalf(u(j)+v(j)): return r: fi;
end:
\end{verbatim}
This procedure calculates the bilateral sum
\[
\sum_{n \in \mathbb{Z}} (-1)^n \left[\prod_{i=0}^{4} \frac{(s_i)_{n+x}}{(1)_{n+x}} \right] (-z)^{n+x} \, (n+x)^j,
\] 
and helps the reader to check the examples.

\section{Examples of bilateral series}
We give some examples of bilateral series related to Ramanujan-like series for $1/\pi^2$.

\begin{example} \rm
Formula (\ref{f3}) in the Appendix is
\begin{equation}\label{gui-1}
\frac18 \sum_{n=0}^{\infty} (-1)^n \frac{(\frac12)_n^5}{(1)_n^5}\frac{20n^2+8n+1}{4^n} = \frac{1}{\pi^2}. 
\end{equation}
Taking $v_1=v_2=0$ and giving two values to $x$ we get numerical approximations of $u_1$, $u_2$. We observe that these values look rational and do not change using other values of $x$. Hence,
we conjecture that
\begin{equation}\label{Z-gui-1}
\frac18 \sum_{n \in \mathbb{Z}} (-1)^n \frac{(\frac12)_{n+x}^5}{(1)_{n+x}^5}\frac{20(n+x)^2+8(n+x)+1}{4^{n+x}}= \frac{1-\frac12 \cos 2\pi x + \frac12 \cos 4\pi x}{\pi^2 \cos^5 \pi x},
\end{equation}
after replacing $u_1$, $u_2$ with their guessed rational values. 
\end{example}

\begin{example}\label{other-example} \rm
Formula (\ref{f10}) in the Appendix is
\[
\sum_{n=0}^{\infty} (-1)^n \frac{\left(\frac12\right)_n\left(\frac13\right)_n \left(\frac23\right)_n\left(\frac16\right)_n\left(\frac56\right)_n}{(1)_n^5} \left( \frac34 \right)^{6n} (1930n^2+549n+45) = \frac{384}{\pi}.
\]
From it we conjecture the bilateral form
\begin{multline}
\frac{1}{384} \sum_{n \in \mathbb{Z}} (-1)^n \frac{\left(\frac12\right)_{n+x}\left(\frac13\right)_{n+x}\left(\frac23\right)_{n+x}\left(\frac16\right)_{n+x}\left(\frac56\right)_{n+x}}{(1)_{n+x}^5}  \left( \frac34 \right)^{6(n+x)} \\ \times \left( 1930(n+x)^2+549(n+x)+45 \right) = 
\frac{11-14 \cos 2\pi x + 6 \cos 4\pi x}{\pi^2 \cos \pi x \, (4\cos^2 \pi x-1)(4\cos^2 \pi x -3)},
\end{multline}
after identifying the coefficients.
\end{example}

\begin{example} \rm
From the following formula:
\begin{equation}\label{gui-16}
\sum_{n=0}^{\infty} \frac{\left(\frac12\right)_n^3\left(\frac14\right)_n\left(\frac34\right)_n}{(1)_n^5}\frac{120n^2+34n+3}{16^n} = \frac{32}{\pi^2},
\end{equation}
which is (\ref{f2}) in the Appendix, we get the bilateral form
\begin{multline*}
\frac{1}{32} \sum_{n \in \mathbb{Z}} \frac{\left(\frac12\right)_{n+x}^3\left(\frac14\right)_{n+x}\left(\frac34\right)_{n+x}}{(1)_{n+x}^5}  \left( \frac{1}{16} \right)^{n+x} (120(n+x)^2+34(n+x)+3) = \\
e^{i \pi x} \, \frac{3-\frac72 \cos 2 \pi x + \frac32 \cos 4 \pi x + \left( \frac12 \sin 2\pi x - \frac12 \sin 4 \pi x \right) i}{\pi^2 \cos^3 \pi x \, (2\cos^2 \pi x -1)},
\end{multline*}
after identifying the coefficients of the Fourier expansion from their numerical approximations. Observe that $v_1$ and $v_2$ are not null because the series in (\ref{gui-16}) is of positive terms. Observe also the factor $e^{i \pi x}$, which comes from $(-1)^n / (-16)^{n+x}$
\end{example}

\begin{example} \rm
For the formula (\ref{f7}) in the Appendix we have the bilateral identity
\begin{multline*}
\frac{\sqrt 7}{392} \sum_{n \in \mathbb{Z}}  \frac{\left(\frac12 \right)_{n+x} \left( \frac18 \right)_{n+x} \left( \frac38 \right)_{n+x} \left( \frac58 \right)_{n+x} \left( \frac78 \right)_{n+x}}{(1)_{n+x}^5} \frac{1}{7^{4(n+x)}} (1920(n+x)^2+304(n+x)+15) \\ =
e^{i \pi x} \,  \frac{ 29-\frac{79}{2} \cos 2 \pi x + \frac{23}{2} \cos 4 \pi x + \left( \frac52 \sin 2\pi x - \frac32 \sin 4 \pi x \right) i }{\pi^2 \csc^2 \frac{\pi}{8} \csc^2 \frac{3\pi}{8}  \cos \pi x \left( \cos^2 \pi x - \cos^ 2 \frac{\pi}{8} \right)\left( \cos^2 \pi x - \cos^ 2 \frac{3\pi}{8} \right)},
\end{multline*}
after identifying the coefficients.
\end{example}

\begin{example} \rm
Looking at the formula (\ref{f9}) in the Appendix which has $z=(3/\phi)^3$, we had the intuition that another series with the other sign of the square root, namely $z=-(3\phi)^3$, could exist and we were right because we discovered it by using the PSLQ algorithm. Here we recover it in a different way by writing the bilateral identity
\begin{multline}\label{diverg-phi}
\sum_{n \in \mathbb{Z}} \frac{\left(\frac12\right)_{n+x}^3\left(\frac13\right)_{n+x}\left(\frac23 \right)_{n+x}}{(1)_{n+x}^5}  
(-1)^n \left(3 \phi\right)^{3(n+x)} (c(n+x)^2+b(n+x)+a) = \\
\frac{3}{4\pi^2} \frac{u_1 \cos 2 \pi x + u_2 \cos 4 \pi x + (1-u_1-u_2)}{\cos^3 \pi x \left(\cos^2 \pi x - \frac14 \right)}.
\end{multline}
Giving five values to $x$ we get numerical approximations of $a$, $b$, $c$, $u_1$ and $u_2$, which we could identify:
\begin{equation}\label{sol-diverg-phi}
u_1=\frac{17}{36}, \quad u_2=\frac{3}{16}, \quad c=2408 + \frac{216}{\phi}, \quad b=1800 + \frac{162}{\phi}, \quad a= 333 + \frac{30}{\phi}.
\end{equation}
Replacing these values and taking $x=0$, we arrive at the ``divergent" formula (\ref{d5}), in the second list of the Appendix.
\end{example}

\begin{example} \rm 
From the ``divergent" series (\ref{d3}) in the Appendix, namely 
\[
\sum_{n=0}^{\infty}\frac{\left(\frac12 \right)_n^3 \left( \frac13 \right)_n \left( \frac23 \right)_n}{(1)_n^5} (28n^2+18n+3) (-1)^n 3^{3n} = \frac{6}{\pi^2},
\]
we have the following bilateral sum evaluation:
\begin{multline}\label{Z-gui-27}
\frac16 \sum_{n \in \mathbb{Z}} \frac{(\frac12)_{n+x}^3(\frac13)_{n+x}(\frac23)_{n+x}}{(1)_{n+x}^5} \left[ 28(n+x)^2+18(n+x)+3 \right] \, (-1)^n \, 27^{n+x} \\ 
= \frac{5 + 4 \cos 2\pi x + 3 \cos 4\pi x}{4 \pi^2 \cos^3 \pi x (4\cos^2 \pi x - 1)},
\end{multline}
after identifying the coefficients of the Fourier terminating expansion from their numerical approximations obtained giving values to $x$.
\end{example}

\section{Applications of the bilateral series}
If $|z|>1$, then letting $x \to -1/2$ or $x \to -s_1$, etc in (\ref{bilateral-general-split}), we obtain the evaluation of some convergent series. In general, taking the limit of (\ref{bilateral-general-split}) as $x \to -s_j$, where $j \in \{0,1,2,3,4\}$, we deduce the following identity:
\begin{multline}\label{diverg-to converg}
\sum_{n=0}^{\infty} \left[\prod_{i=0}^{4} \frac{(s_j)_{n}}{(s_i+s_j)_{n}} \right] 
\left[ a+b(-n-s_j)+c(-n-s_j)^2 \right] z^{-n} =  \\
\frac{(-z)^{s_j}}{\pi^2}\lim_{x \to -s_j}\left[\prod_{i=0}^{4} \frac{(1)_x}{(s_i)_{x}} \right] 
\frac{u_1 \cos 2\pi x + u_2 \cos 4\pi x + (1-u_1-u_2) + v_1 \sin 2 \pi x + v_2 \sin 4 \pi x}{\csc^2 \pi s_1 \, \csc^2 \pi s_2 \, \cos \pi x (\cos^2 \pi x - \cos^2 \pi s_1)(\cos^2 \pi x - \cos^2 \pi s_2)}.
\end{multline}
Another application is explained in \cite{GuiRo}, but only proved for bilateral sums related to Ramanujan-type series for $1/\pi$.

\begin{example} \rm
From the formula (\ref{Z-gui-27}), and  taking $s_j=1/2$ in (\ref{diverg-to converg}), we get
\begin{equation}
\sum_{n=0}^{\infty} \frac{\left(\frac12\right)_n^5}{\left(1\right)_n^3 \left(\frac16\right)_n \left(\frac56 \right)_n} \frac{28n^2+10n+1}{6n+1} \left( \frac{-1}{27} \right)^n = \frac{3}{\pi},
\end{equation}
which is  a new convergent series for $1/\pi$. In the same way, from the ``divergent" series (\ref{diverg-phi}-\ref{sol-diverg-phi}), we get the convergent evaluation
\begin{equation}
\sum_{n=0}^{\infty} \frac{\left(\frac12\right)_n^5}{\left(1\right)_n^3 \left(\frac16\right)_n \left(\frac56 \right)_n}  \frac{ (2408  \! + \!  \frac{216}{\phi}) n^2 \! + \!  (608 + \frac{54}{\phi})n +  (35 + \frac{3}{\phi})}{6n+1} \left( \frac{-1}{3\phi} \right)^{3n} = \frac{3\sqrt{\phi^3}}{\pi},
\end{equation}
where $\phi$ is the fifth power of the golden ratio.
\end{example}

\begin{example} \rm
From (\ref{Z-gui-1}), and letting $s_j \to -1/2$ in (\ref{diverg-to converg}), we recover formula (\ref{d1}) in the Appendix. We observe that a formula with $z$ implies another one with $z^{-1}$ in the family $s_1=s_2=1/2$, and we will refer to this property as ``duality".
\end{example}

\section{The mirror map}
All the parameters of the bilateral sums related to Ramanujan-like series for $1/\pi^2$ are algebraic, but the value of $q$ (related to $z$ by the mirror map) is not. Now we will define a function $w(x)$ in which $q$ is easily related to the coefficients of the Fourier expansion. First, we let $u(x)$ and $v(x)$ be the analytic functions
\[
u(x) =\prod_{i=0}^{4} \frac{\cos \pi x - \cos \pi s_i}{1-\cos \pi s_i} \sum_{n \in \mathbb{Z}} (-1)^n \left[\prod_{i=0}^{4} \frac{(s_i)_{n+x}}{(1)_{n+x}}\right] (-z)^{n+x}, 
\]
and
\[
v(x) =\prod_{i=0}^{4} \frac{\cos \pi x - \cos \pi s_i}{1-\cos \pi s_i} \sum_{n \in \mathbb{Z}} (-1)^n \left[\prod_{i=0}^{4} \frac{(s_i)_{n+x}}{(1)_{n+x}}\right] (n+x) (-z)^{n+x}.
\]
Then, we define the analytic function 
\[ 
w(x) = v(0)u(x)-u(0)v(x).
\]
We have the following Fourier expansion:
\[
w(x)=\frac{}{}  a_1 \cos 2\pi x + a_2 \cos 4\pi x + (1-a_1-a_2) + b_1 \sin 2 \pi x + b_2 \sin 4 \pi x,
\]
The above identity for the function $w(x)$ implies an expansion of the form
\begin{multline}\label{q-alternating}
\sum_{n=0}^{\infty} \left[\prod_{i=0}^{4} \frac{(s_i)_{n}}{(1)_{n}} \right] n z^n \sum_{n=0}^{\infty} (-1)^n \left[ \prod_{i=0}^{4} \frac{(s_i)_{n+x}}{(1)_{n+x}}\right] (-z)^{n+x} \\
- \sum_{n=0}^{\infty} \left[\prod_{i=0}^{4} \frac{(s_i)_{n}}{(1)_{n}} \right] z^n \sum_{n=0}^{\infty} (-1)^n \left[ \prod_{i=0}^{4} \frac{(s_i)_{n+x}}{(1)_{n+x}}\right] (n+x) (-z)^{n+x}
\\ = p_1 x + p_2 x^2 + p_3 x^3 + p_4 x^4 + \mathcal{O}(x^5)
\end{multline}
if $z<0$ (alternating series), and an expansion of the form
\begin{multline}\label{q-positive}
\sum_{n=0}^{\infty} \left[\prod_{i=0}^{4} \frac{(s_i)_{n}}{(1)_{n}} \right] n z^n \sum_{n=0}^{\infty} \left[ \prod_{i=0}^{4} \frac{(s_i)_{n+x}}{(1)_{n+x}}\right] z^{n+x} \\
- \sum_{n=0}^{\infty} \left[\prod_{i=0}^{4} \frac{(s_i)_{n}}{(1)_{n}} \right] z^n \sum_{n=0}^{\infty} \left[ \prod_{i=0}^{4} \frac{(s_i)_{n+x}}{(1)_{n+x}}\right] (n+x) z^{n+x}
\\ = p_1 x + p_2 x^2 + p_3 x^3 + p_4 x^4 + \mathcal{O}(x^5)
\end{multline}
if $z>0$ and $z \neq 1$ (series of positive terms). From the differential equations for Calabi-Yau threefolds \cite{YaZu, Gui-matrix, AlGu, Zu4}, we deduce that
\[ 
-\frac{1}{\pi} \frac{p_2}{p_1}=t, \quad q=-e^{-\pi t} \quad \text{or} \quad \, q=e^{-\pi t}
\]
for (\ref{q-alternating}) and (\ref{q-positive}) respectively, where $z=z(q)$ is the mirror map, and 
\[
k = 2 \left( \frac{1}{\pi^2} \frac{p_4}{p_2} - \frac53 - \cot^2 \pi s_1 - \cot^2 \pi s_2 \right).
\]
The method used in \cite{AlGu} is good for convergent Ramanujan series, but not when the series is ``too divergent". However the method based on bilateral series permits to calculate $q$ with many digits in all the cases, when we know the value of $z$.

\begin{example} \rm  
Taking $s_1=1/2$, $s_2=1/3$ and $z=27\phi^{-3}$ (formula (\ref{f9}) in the Appendix), and using the formulas in \cite{AlGu}, and $q=e^{-\pi t}$, we get
\[
t_1=t(27 \phi^{-3})=3.619403396730928522140860042453285904901.
\]
However for $z=(-3 \phi)^3$ (formula (\ref{d5}) in the Appendix), we need to use the method based on bilateral sums. Then, we  get that the value of $t$ corresponding to (\ref{d5}), is with $40$ correct digits (we can calculate many more) equal to
\[
t_2=t(-27\phi^3)=1.233412165189594043723756118628903242841.
\]
We get the values of $\tau$ from (\ref{tau}), and they are $\tau_1=4/3 \cdot \sqrt{10}$ and $\tau_2=1/9 \cdot \sqrt{10}$. We thought that the values of $t_1$ and $t_2$ would be easily related but we have not succeeded in finding any relation.
\end{example}

\begin{example} \rm  
For $s_1=s_2=1/2$, the series with $z$ and with $z^{-1}$ satisfy duality. For the formulas (\ref{f1}) and (\ref{d2}) in the Appendix, we have
\begin{align*}
t_1=t(-2^{-10})   &=4.412809109031200738238212268698423552548,
\\ t_2=t(-2^{10}) &=1.252302434231184754606061614044396018505.
\end{align*}
We get $\tau_1=\tau(-2^{-10})=\sqrt{41}$ and $\tau_2=1/16 \cdot \sqrt{41}$ from (\ref{tau}), and we observe that
\[
t_1 t_2 = 2 \frac{\sqrt{41}+3}{\sqrt{41}-3}.
\]
Hence the values of $t_1$ and $t_2$ are nicely related.
\end{example}

\section{Conclusion}
We have conjectured that the terminating Fourier expansion of the function (\ref{rama-like-gui}) corresponding to a Ramanujan-like series for $1/\pi^2$ has rational coefficients. A challenging problem is to prove it. That is, to obtain rigourosly the bilateral form corresponding to any Ramanujan-like series for $1/\pi^2$. This would suppose to give another step towards understanding the family of formulas for $1/\pi^2$.
\par Below, we show how to solve the problem for the Ramanujan-type series for $1/\pi$. Consider, for example the Ramanujan series
\begin{equation}\label{rama-882}
\sum_{n=0}^{\infty} (-1)^n \frac{\left(\frac12\right)_n\left(\frac14\right)_n \left(\frac34\right)_n}{(1)_n^3} \frac{21460n+1123}{882^{2n}} = \frac{3528}{\pi}.
\end{equation}
To solve the problem of finding the bilateral form we need to determine the value of $u$ in
\begin{equation}\label{bilateral-rama-882}
\sum_{n \in \mathbb{Z}} (-1)^n \frac{\left(\frac12\right)_{n+x} \left(\frac14\right)_{n+x} \left(\frac34\right)_{n+x}}{(1)_{n+x}^3} \frac{21360(n+x)+1123}{3528 \cdot 882^{2(n+x)}} = \frac{1-u+u\cos 2 \pi x}{\pi \cos \pi x (2\cos^2 \pi x-1)}.
\end{equation}
Expanding the right side, comparing with \cite[Expansion 1.1]{Gui-matrix}, and using \cite[eq. 2.30-2.32]{Gui-matrix}, we rigorously obtain that $u=10$,
which is a rational number.

\section{Appendix}

In the following two lists $\phi$ means the fifth power of the golden ratio. That is
\[
\phi = \left (\frac{1+\sqrt 5}{2} \right)^5.
\]
The notation ${\overset{?}=}$ means that the formula remains unproved, and the notation $``="$ means that we get the equality by analytic continuation.

\subsection{List of convergent formulas}

\begin{equation}\label{f1}
\sum_{n=0}^{\infty}\frac{\left(\frac12 \right)_n^5}{(1)_n^5}\frac{(-1)^n}{2^{10n}} (820n^2+180n+13)=\frac{128}{\pi^2},
\end{equation}
\begin{equation}\label{f2}
\sum_{n=0}^{\infty}  \frac{\left( \frac12 \right)_n^3 \left( \frac14 \right)_n
\left( \frac34 \right)_n}{(1)_n^5} \frac{1}{2^{4n}} (120n^2+34n+3)=\frac{32}{\pi^2},
\end{equation}
\begin{equation}\label{f3}
 \sum_{n=0}^{\infty}  \frac{\left( \frac12 \right)_n^5}{(1)_n^5} \frac{(-1)^n}{2^{2n}} (20n^2+8n+1)=\frac{8}{\pi^2},
\end{equation}
\begin{equation}\label{f4}
\sum_{n=0}^{\infty}  \frac{\left( \frac12 \right)_n \left( \frac14 \right)_n \left( \frac34 \right)_n \left( \frac16 \right)_n \left( \frac56 \right)_n}{(1)_n^5} \frac{(-1)^n}{2^{10n}} (1640n^2+278n+15)\, {\overset{?}=} \, \frac{256 \sqrt 3}{\pi^2},
\end{equation}
\begin{equation}\label{f5}
\sum_{n=0}^{\infty} \frac{\left( \frac12 \right)_n \left( \frac13 \right)_n \left( \frac23 \right)_n \left( \frac14 \right)_n \left( \frac34 \right)_n}{(1)_n^5}  \frac{(-1)^n}{48^n} (252n^2+63n+5) \, {\overset{?} =} \, \frac{48}{\pi^2},
\end{equation}
\begin{equation}\label{f6}
\sum_{n=0}^{\infty}  \frac{\left( \frac12 \right)_n \left( \frac13 \right)_n \left( \frac23 \right)_n \left( \frac16 \right)_n \left(\frac56 \right)_n}{(1)_n^5} \frac{(-1)^n}{80^{3n}} (5418n^2+693n+29)\, {\overset{?} =} \, \frac{128 \sqrt 5}{\pi^2},
\end{equation}
\begin{equation}\label{f7}
\sum_{n=0}^{\infty}  \frac{\left(\frac12 \right)_n \left( \frac18 \right)_n \left( \frac38 \right)_n \left( \frac58 \right)_n \left( \frac78 \right)_n}{(1)_n^5} \frac{1}{7^{4n}} (1920n^2+304n+15) \, {\overset{?} =} \, \frac{56 \sqrt 7}{\pi^2},
\end{equation}
\begin{equation}\label{f8}
\sum_{n=0}^{\infty}  \frac{\left(\frac12 \right)_n^3 \left( \frac13 \right)_n \left( \frac23 \right)_n}{(1)_n^5} \left( \frac34 \right)^{3n} (74n^2+27n+3)\, = \, \frac{48}{\pi^2},
\end{equation}
\begin{equation}\label{f9}
\sum_{n=0}^{\infty} \frac{\left(\frac12 \right)_n^3 \! \left( \frac13 \right)_n \! \left( \frac23 \right)_n}{(1)_n^5} \! \left( \frac{3}{\phi} \right)^{3n} \! \left[ (32 \! - \! \frac{216}{\phi}) n^2 \! + \! (18 \! - \! \frac{162}{\phi})n \! + \! (3 \! -\! \frac{30}{\phi}) \right] \, {\overset{?} =} \, \frac{3}{\pi^2},
\end{equation}
\begin{equation}\label{f10}
\sum_{n=0}^{\infty}  \frac{\left(\frac12 \right)_n \left( \frac13 \right)_n \left( \frac23 \right)_n \left( \frac16 \right)_n \left( \frac56 \right)_n}{(1)_n^5} (-1)^n \left( \frac{3}{4} \right)^{6n} (1930n^2+549n+45) \, {\overset{?}=} \, \frac{384}{\pi^2},
\end{equation}
\begin{equation}\label{f11}
\sum_{n=0}^{\infty}  \frac{\left(\frac12 \right)_n \left( \frac13 \right)_n \left( \frac23 \right)_n \left( \frac16 \right)_n \left( \frac56 \right)_n}{(1)_n^5} \left( \frac{3}{5} \right)^{6n} (532n^2+126n+9) \, {\overset{?}=} \, \frac{375}{4\pi^2}.
\end{equation}

\subsection{List of ``divergent" formulas}

\begin{equation}\label{d1}
\sum_{n=0}^{\infty}\frac{\left(\frac12 \right)_n^5}{(1)_n^5} (10n^2+6n+1) (-1)^n 4^n \, \, ``\!=\!" \, \, \frac{4}{\pi^2},
\end{equation}
\begin{equation}\label{d2}
\sum_{n=0}^{\infty}\frac{\left(\frac12 \right)_n^5}{(1)_n^5} (205n^2+160n+32) (-1)^n 2^{10n} \, \, ``\!=\!" \, \, \frac{16}{\pi^2},
\end{equation}
\begin{equation}\label{d3}
\sum_{n=0}^{\infty}\frac{\left(\frac12 \right)_n^3 \left( \frac13 \right)_n \left( \frac23 \right)_n}{(1)_n^5} (28n^2+18n+3) (-1)^n 3^{3n} \, `` \, = \, " \, \frac{6}{\pi^2},
\end{equation}
\begin{equation}\label{d4}
\sum_{n=0}^{\infty}\frac{\left(\frac12 \right)_n \left( \frac13 \right)_n \left( \frac23 \right)_n\left( \frac14 \right)_n \left( \frac34 \right)_n}{(1)_n^5} (172n^2+75n+9) (-1)^n \left( \frac{27}{16} \right)^n \, \, ``=" \, \, \frac{48}{\pi^2},
\end{equation}
\begin{equation}\label{d5}
\sum_{n=0}^{\infty} \frac{\left(\frac12 \right)_n^3 \! \left( \frac13 \right)_n \! \left( \frac23 \right)_n}{(1)_n^5} \! (-3 \phi)^{3n} \! \left[ (2408 \! + \! \frac{216}{\phi}) n^2 \! + \! (1800 \! + \! \frac{162}{\phi})n \! + \! (333 \! + \! \frac{30}{\phi}) \right] \, `` \, {\overset{?}=} \, " \, \frac{36}{\pi^2},
\end{equation}
\begin{equation}\label{d6}
\sum_{n=0}^{\infty}\frac{\left(\frac12 \right)_n \left( \frac15 \right)_n \left( \frac25 \right)_n\left( \frac35 \right)_n \left( \frac45 \right)_n}{(1)_n^5} (483n^2+245n+30) (-1)^n \left( \frac{5^5}{2^8} \right)^n \, ``\, {\overset{?}=}\," \,  \frac{80}{\pi^2}.
\end{equation}
Formulas (\ref{f1}, \ref{f2}, \ref{f3}, \ref{f8}, \ref{d1}), were proved by the author using the WZ-method. For a proof of (\ref{d2}), (\ref{d3}) and (\ref{d4}) see \cite[Section 6]{gui-dougall}. All the other formulas are conjectured. The conjectured formula (\ref{f11}) is joint with Gert Almkvist. In \cite{AlGu} we recovered all the known convergent series for $1/\pi^2$ and also two ``divergent" ones: (\ref{d1}) and (\ref{d4}). I discovered formula (\ref{d5}) by replacing $\sqrt{5}$ with $-\sqrt{5}$ in the value of $z$ of (\ref{f9}) and then using the PSLQ algorithm. We can check that the mosaic supercongruences pattern \cite{Gu6} holds for the formula (\ref{d5}). Inspired by these congruences, by the conjectured formulas \cite[Conj. 1.1--1.6]{Sun} and by \cite{GuiRo}, we observe that 
\begin{align}
\nonumber \sum_{n=1}^{\infty}  \frac{(1)_n^5}{\left(\frac12 \right)_n^3 \left( \frac13 \right)_n \left( \frac23 \right)_n} \left( \frac{-1}{3\phi} \right)^{3n} 
& \frac{(2408+216 \phi^{-1})n^2-(1800+162 \phi^{-1})n+(333+30\phi^{-1})}{n^5} \\ {\overset{?} =}
& \frac{1125}{4}\sqrt{5}L_5(3)-448\zeta(3),
\end{align}
seems true. As the convergence is fast, we can use it to get many digits of $L_5(3)$. 
I discovered the formula (\ref{d6}) very recently. Related to it are the Zudilin-type supercongruences \cite{Zu0, GuiZu}:
\begin{equation}\label{sc1}
\sum_{n=0}^{p-1}\frac{\left(\frac12 \right)_n \left( \frac15 \right)_n \left( \frac25 \right)_n\left( \frac35 \right)_n \left( \frac45 \right)_n}{(1)_n^5} (483n^2+245n+30) (-1)^n \left( \frac{5^5}{2^8} \right)^n \, \, {\overset{?} \equiv} \, \, 30 p^3 \pmod{p^5},
\end{equation}
for primes $p>2$, and the ``upside-down" counterpart \cite{GuiRo}, namely 
\begin{equation}\label{ud1}
\sum_{n=1}^{\infty}\frac{(1)_n^5}{\left(\frac12 \right)_n \left( \frac15 \right)_n \left( \frac25 \right)_n\left( \frac35 \right)_n \left( \frac45 \right)_n} \frac{-483n^2+245n-30}{n^5} (-1)^n \left( \frac{2^8}{5^5} \right)^n \, \, {\overset{?}=} \, \, 896 \, \zeta(3),
\end{equation}
which is a convergent series for $\zeta(3)$.

\subsection{List of Ramanujan-like formulas of higher degree (added in June 2018)}

B. Gourevitch, inspired by the formulas in \cite{guilleraAAMwz}, searched similar formulas for $1/\pi^3$ with the help of the PSLQ algorithm, and discovered the following one \cite{guilleraEMpi2}:
\begin{equation}\label{gourevitch}
\sum_{n=0}^{\infty}\frac{\left(\frac12 \right)_n^7}{(1)_n^7}\frac{1}{2^{6n}} (168n^3 +76n^2+14n+1) \, {\overset{?} =} \, \frac{32}{\pi^3}. 
\end{equation}
The formula $\rm H(1/2)$ in \cite[Section 4]{guilleraEMpi2} is the ``upside-down" of the following ``divergent" Ramanujan-like series for $1/\pi^3$:
\begin{equation}
\sum_{n=0}^{\infty}\frac{\left(\frac12 \right)_n^7}{(1)_n^7}\, 2^{6n} \,(84n^3 +88n^2+32n+4) \, ``{\overset{?} =}" \, \frac{-24 i}{\pi^3}.
\end{equation}
Yue Zhao (2017) has discovered the following ``divergent" Ramanujan-like series for $1/\pi^3$. 
\begin{equation}
\sum_{n=0}^{\infty}\frac{\left(\frac12 \right)_n^5\left(\frac13 \right)_n\left(\frac23 \right)_n }{(1)_n^7}\, \left( \frac{27}{4} \right)^n \,(92n^3 +84n^2+27n+3) \, ``{\overset{?} =}" \, \frac{-48 i}{\pi^3}.\end{equation}
The upside-down of it is a convergent formula for $\pi^4$ given in \cite{Zhao}. The formula (\ref{cullen}) was discovered by Jim Cullen in the year $2010$:
\begin{equation}\label{cullen}
\sum_{n=0}^{\infty}  \frac{\left(\frac{1}{2}\right)_n^7 \left(\frac{1}{4}\right)_n \left(\frac{3}{4}\right)_n}{(1)_n^9} \frac{1}{2^{12n}}  (43680n^4+20632n^3+4340n^2+466n+21) \, {\overset{?}=} \, \frac{2048}{\pi^4}.
\end{equation}
The following two formulas have been discovered by Yue Zhao \cite{Zhao}:
\begin{equation}
\sum_{n=0}^{\infty}  \frac{\left(\frac12 \right)_n^5 \left(\frac13 \right)_n \left(\frac23 \right)_n\left(\frac14 \right)_n \left(\frac34 \right)_n}{(1)_n^9} \left( \frac{-27}{256} \right)^n  (4528n^4+3180n^3+972n^2+147n+9) \, {\overset{?}=} \, \frac{768}{\pi^4}.
\end{equation}
\begin{equation}
\sum_{n=0}^{\infty}  \frac{\left(\frac12 \right)_n^5 \left(\frac15 \right)_n \left(\frac25 \right)_n\left(\frac35 \right)_n \left(\frac45 \right)_n}{(1)_n^9} \left( \frac{-5}{4} \right)^{5n}  (5532n^4+5600n^3+2275n^2+425n+30) ``\, {\overset{?}=} \," \frac{1280}{\pi^4}.
\end{equation}
The last one is ``divergent" and Zhao gives in \cite{Zhao} the udside-down of it: a remarkable formula for $\zeta(5)$. All these formulas satisfy supercongruences of Zudilin type (that is, which are analogues to those in  \cite{Zu0} and \cite{GuiZu}).

\section{New conjectures (Added in June 2018)}

We will use the notation: $s_0=1/2$, $s_3=1-s_1$, and $s_4=1-s_2$, and the following couples for $(s_1, s_2)$: $(1/2,1/2)$, $(1/2,1/3)$, $(1/2,1/4)$, $(1/2,1/6)$, $(1/3,1/3)$, $(1/3,1/4)$, $(1/3,1/6)$, $(1/4,1/4)$, $(1/4,1/6)$, $(1/6,1/6)$, $(1/5,2/5)$, $(1/8,3/8)$, $(1/10,3/10)$, $(1/12,5/12)$. 

\subsection{Case $_5F_4$}
Let
\[
{\rm B_5F_4}(s_1,s_2,z,x)=\omega(s_1,s_2,x) \sum_{n \in \mathbb{Z}} (-1)^n \left[\prod_{i=0}^{4} \frac{(s_i)_{n+x}}{(1)_{n+x}} \right] (-z)^{n+x},
\]
where 
\[
\omega(s_1,s_2,x)=\cos \pi x \, \frac{\cos^2 \pi x - \cos^2 \pi s_1}{\sin^2 \pi s_1} \, \frac{\cos^2 \pi x - \cos^2 \pi s_2}{\sin^2 \pi s_2},
\]
is the factor we need to have an analytic function. If $z>0$ we replace $(-1)^n (-z)^{n+x}$ with $e^{-i \pi x} z^{n+x}$. We know that there exist $u_0$, $u_1$, $u_2$, $v_1$, $v_2$, such that the following Fourier expansion holds:
\[
{\rm B_5F_4}(s_1,s_2,z,x) = u_0+u_1 \cos 2\pi x +u_2 \cos 4\pi x + v_1 \sin 2 \pi x + v_2 \sin 4 \pi x.
\] 
$ \rm B_5F_4(s_1,s_2,z,x)$ is calculated by the Maple procedure \texttt{bilater(s1,s2,z0,x,0)} of page $3$. Based on numerical calculations we make the following conjecture: 
\subsection*{}
For the special values of $z$ (those that appear in the Ramanujan-like series for $1/\pi^2$) with $z<0$, there is an integer relation among $u_0$, $u_1$, $u_2$,
and 2-) For the special values of $z$ with $z>0$, there is an integer relation among $Re(u_0)$, $Re(u_1)$, $Re(u_2)$. There is also another relation: $ Im(u_0+u_1+u_2)=0$, however this last relation holds for all the values of $z$ and therefore is not exclusive of the special values of $z$.
\\ \par
Examples: Executing \texttt{bilater(s1,s2,z0,x,0)} for five values of $x$, solving numerically a system of equations, and using the PSLQ algorithm we get examples like the following ones: For $B_5F_4(1/2, 1/2, -1/4, x)$ we get the relation $u_0+u_1+9u_2=0$; for $B_5F_4(1/2, 1/2, -1/2^{10}, x)$ we obtain $11 u_0+15 u_1+ 35 u_2=0$; and for $B_5F_4(1/3, 1/4, -1/48, x)$ we get the relation $17u_0+23 u_1+ 65 u_2=0$. The following two examples are for series of positive terms: For $B_5F_4(1/8, 3/8, 1/7^4, x)$, we get $Re(37u_0+43u_1+69u_2)=0$; and for $B_5F_4(1/2, 1/3, (3/\phi)^3, x)$ we get $Re(35u_0+41u_1+131u_2)=0$, where $\phi$ is the fifth power of the golden ratio.

\subsection{Case $_4F_3$}
Let
\[
{\rm B_4F_3}(s_1,s_2,z,x)=\omega(s_1,s_2,x) \sum_{n \in \mathbb{Z}} \left[\prod_{i=1}^{4} \frac{(s_i)_{n+x}}{(1)_{n+x}} \right] z^{n+x},
\]
where 
\[
\omega(s_1,s_2,x)=\frac{\cos^2 \pi x - \cos^2 \pi s_1}{\sin^2 \pi s_1} \, \frac{\cos^2 \pi x - \cos^2 \pi s_2}{\sin^2 \pi s_2},
\]
is the factor we need to have an analytic function. If $z<0$, we replace $z^{n+x}$ with $e^{-i \pi x} (-z)^{n+x}$. We know that there exist $u_0$, $u_1$, $u_2$, $v_1$, $v_2$, such that the following Fourier expansion holds:
\[
{\rm B_4F_3}(s_1,s_2,z,x) = u_0+u_1 \cos 2\pi x +u_2 \cos 4\pi x + v_1 \sin 2 \pi x + v_2 \sin 4 \pi x.
\] 
For the two special values of $z$ given in \cite[Table 14]{watkins}, we observe numerically that for $B_4F_3(1/3, 1/3, -1/8, x)$ we have the relations 
$Re(2u_0-5u_1)=0$, $Im(u_0+u_1)=0$ and $Re(3u_0+5u_2)=0$, $Im(u_2)=0$; and for $B_4F_3(1/3, 1/4, -4, x)$ we see that $Re(6u_0-7u_1)=0$, $Im(2u_0+3u_1)=0$, and $Re(u_0+7u_2)=0$, $Im(u_0+3u_2)=0$.

\begin{remark}
We can state other conjectures of the same nature for hypergeometric series of types $_6F_5$, $_7F_6$, etc.
\end{remark}

\end{document}